\begin{document}

\begin{frontmatter} 
\title{A Conjecture of Flach and Morin}
\author[1]{Bruno Chiarellotto\fnref{fn1}}
\ead{chiarbru@math.unipd.it}
\author[1]{Nicola Mazzari\corref{cor1}\fnref{fn3}}
\ead{nicola.mazzari@unipd.it}
\author[1]{Yukihide Nakada\fnref{fn1,fn2}}
\ead{yukihide.nakada@unipd.it}
\cortext[cor1]{Corresponding author}
\fntext[fn1]{Supported by grant MIUR-PRIN2017 ``Geometric, Algebraic, and Analytic Methods in Arithmetic''}
\fntext[fn2]{Supported by INdAM grant INdAM-DP-COFUND-2015}
\fntext[fn3]{Supported by BIRD 2022 "Arithmetic cohomology theories"}
\affiliation[1]{organization={Dipartimento di Matematica ``Tullio Levi-Civita'', Università degli Studi di Padova},
addressline={Via Trieste 63}, 
postcode={35121}, 
city={Padova}, 
country={Italy}}
\begin{abstract}
    A conjecture recently stated by Flach and Morin relates the action of the monodromy on the Galois invariant part of the $p$-adic Beilinson--Hyodo--Kato cohomology of the generic fiber of a scheme defined over a DVR of mixed characteristic to (the cohomology of) its special fiber.
    We prove the conjecture in the case that the special fiber of the given arithmetic scheme is also a fiber of a geometric family over a curve in positive characteristic.
\end{abstract}
\begin{keyword}
Log-crystalline cohomology \sep Monodromy \sep Rigid cohomology \sep $p$-adic cohomology
\MSC  14F30;  Secondary 14G22, 11G25
\end{keyword}

\end{frontmatter}

\section{Introduction}
    Fix a prime number $p$.
    Let $K/\Q_p$ be a finite extension with ring of integers $\O_K$ and residue field $k$. Let $W=W(k)$ be the ring of Witt vectors of $k$ and let $K_0$ be its fraction field.
    We write $S = \op{Spec}(W)$, and in the context of log structures we write $S^{\times}$ (resp. $S^\varnothing$) for $S$ equipped with the canonical log structure $1 \mapsto 0$ (resp. the trivial log structure).   

    Let $f: X \to \op{Spec}(\O_K)$ be a flat, projective morphism of relative dimension $n$.
    We write $X_s$ and $X_{\overline{s}}$ for its special and geometric special fiber, respectively, and $X_\eta$ and $X_{\overline{\eta}}$ for its generic and geometric generic fibers, respectively.
    In \cite[\S 7]{FlachMorin2018}, Flach and Morin speculate on the relationship between the geometric cohomology theories of the generic and special fibers.
    The geometric cohomology groups of the generic fiber 
    \[
    X\mapsto H^{B,i}_{HK}( X_{\overline{\eta},h} )
    \]
   are the \emph{Beilinson--Hyodo--Kato} ones, considered in \cite{NekovarNiziol2016} and taking values in the category of $(\phi, N, G_K)$-modules (for more on this structure see, for example, \cite{Berger2004}).
    This cohomology theory was defined by Beilinson for any $K$-scheme $Z$, neither smooth nor proper,  using $h$-descent \cite{Beilinson2013} {}\footnote{The Beilinson--Hyodo--Kato is related to $p$-adic étale cohomology via the Fontaine functor $D_{pst}$, namely 
    \[
    H^{B,i}_{HK}(X_{\overline{\eta},h}) \cong D_{pst}(H^i(X_{\overline{\eta}}, \Q_p)):=\colim_{H\le G_K, \text{open}} (\mathbb{B}_{st}\otimes_{K_0}H^i(X_{\overline{\eta}}, \Q_p))^H \ ,
    \]
    but we will not use this fact.}.

    Moreover, when the morphism $f$ is smooth or log-smooth, there are canonical isomorphisms between Beilinson--Hyodo--Kato cohomology and the cohomology of the geometric special fiber: namely, if $f$ is smooth then it coincides with crystalline cohomology of the geometric special fiber $X_{\overline{s}}$ and if $f$ is log-smooth then it coincides with the log-crystalline (Hyodo--Kato) cohomology of $X_{\overline{s}}$ (see \cite[\S 7.2]{FlachMorin2018}).

    The $p$-adic Weil cohomology theory for varieties $Y/k$  is rigid cohomology (with coefficients in $K_0$)
    \[
    Y \mapsto H^i_{\op{rig}}(Y)
    \]
    taking values in the category of $\vp$-modules, i.e., finite-dimensional $K_0$-vector spaces with a Frobenius-semilinear endomorphism $\vp$.
    In their article, Flach and Morin conjecture the following relationship between the two cohomology theories:
    \begin{conjecture}{
    ({\cite[Conjecture~7.15]{FlachMorin2018}})}
    For regular $X$ of absolute dimension $d=n+1$ there is an exact triangle in the category of $\vp$-modules 
\begin{align*}
    R\Gamma_{\op{rig}}(X_s) \xrightarrow{\op{sp}} \left[ R\Gamma_{HK}^B(X_{\overline{\eta},h})^{G_K} \xrightarrow{N}
    R\Gamma_{HK}^B(X_{\overline{\eta}, h})(-1)^{G_K}\right] &\xrightarrow{\op{sp}'}\\
    R\Gamma_{\op{rig}}^*(X_s)(-d)[-2d+1] &\to.
\end{align*}
    where $\op{sp}$ induces the specialization map defined in \cite{Yitao2017} and $\op{sp}'$ is the composite of the Poincar\'e duality isomorphism
    \[
        R\Gamma^B_{HK}(X_{\overline{\eta}, h})(-1) \cong R\Gamma^B_{HK}(X_{\overline{\eta},h})^*(-d)[-2d+2]
    \]
    on $X_{\overline{\eta}}$ and $\op{sp}^*$. 
    \end{conjecture}

    We can reformulate this conjecture in the case where $f$ is log-smooth  using existing comparisons between the cohomology of special and generic fibers. 
    Namely, under this extra condition, we already have a comparison between the cohomology of the special and generic fibers: Tsuji's theorem \cite[Theorem 0.2]{Tsuji1999} provides a canonical isomorphism
    \begin{align*}
    H^{B,i}_{HK}(X_{\overline{\eta}, h})^{G_K} \cong  H^i_{\op{log-crys}}(X_k/S^{\times}).
    \end{align*}
    We thereby obtain the following crystalline realization of this exact triangle in the category of $\vp$-modules:
    \begin{align}\label{flach_morin_crystalline_realization}
        R\Gamma_{\op{rig}} (X_s) \to \left[ R\Gamma_{\op{log-crys}}(X_s/S^{\times}) \xrightarrow{N} R\Gamma_{\op{log-crys}}(X_s/S^{\times})(-1)\right] \to\\
        R\Gamma^*_{\op{rig}}(X_s)(-n-1)[-2n-1] &\to. \nonumber
    \end{align}
    Thus, in the log-smooth case, the conjecture describes the monodromy operator on the log-crystalline cohomology of $X_s$ in terms of rigid cohomology and its Poincar\'e dual.

    Since the triangle (\ref{flach_morin_crystalline_realization}) no longer involves the generic fiber $X_\eta$ or the valuation ring $\O_K$, one can untwine the triangle from its original context and ask under what conditions on a $k$-scheme $X_s$ such an exact triangle exists. 
    The main result of this paper is the following theorem, which states that such an exact triangle exists when $X_s$ is the special fiber, not of an  arithmetic family $f: X \to\op{Spec}( \O_K)$, but a geometric family $f: X \to C$ where $C$ is a curve over $k$:
    \begin{theorem}\label{flach-morin_main_theorem}
    Let $f: X \to C$ be a proper, flat, generically smooth morphism over $k$ of relative dimension $n$, where $C$ is a smooth curve and $X$ is smooth.
    Assume that for some $k$-rational point $s \in C$ the fiber $X_s$ is a normal crossing divisor in $X$.
    Endow $X$ with the log structure given by the divisor $X_s$ and endow $X_s$ itself with the pullback log-structure.
    Then there is an exact triangle 
    \begin{center}
        \begin{tikzcd}[row sep=4em, column sep=-3.2em]
           & R\Gamma^*_{\op{rig}}(X_s)(-n-1)[-2n-1] \ar[dl]\\
        R\Gamma_{\op{rig}} (X_s) \ar[rr] && \left[ R\Gamma_{\op{log-crys}}(X_s/S^{\times}) \xrightarrow{N} R\Gamma_{\op{log-crys}}(X_s/S^{\times})(-1)\right] \ar[ul]
        \end{tikzcd}
    \end{center}
    in the derived category of $\vp$-modules.
    \end{theorem}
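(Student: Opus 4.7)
The plan is to adapt, at the level of the derived category of $\vp$-modules, Chiarellotto-Tsuzuki's construction of the Clemens-Schmid exact sequence in \cite{ChiarellottoTsuzuki2014}. By Lemma \ref{log-rig_instead_of_log-crys} we first replace $R\Gamma_{\op{log-crys}}(X_s/S^\times)$ by $R\Gamma_{\op{log-rig}}(X_s/\fS^\times)$ compatibly with the monodromy operator, and by Berthelot's Poincar\'e duality (as recalled in the introduction) we have $R\Gamma^*_{\op{rig}}(X_s)(-n-1)[-2n-1] \simeq R\Gamma_{X_s,\op{rig}}(X)[1]$. The task thus reduces to the construction of an exact triangle
\[
R\Gamma_{\op{rig}}(X_s) \to \bigl[R\Gamma_{\op{log-rig}}(X_s/\fS^\times) \xrightarrow{N} R\Gamma_{\op{log-rig}}(X_s/\fS^\times)(-1)\bigr] \to R\Gamma_{X_s,\op{rig}}(X)[1] \to.
\]

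The two fundamental input triangles are the localization triangle (Definition \ref{rigid_coh_with_support_def}) $R\Gamma_{X_s,\op{rig}}(X) \to R\Gamma_{\op{rig}}(X) \to R\Gamma_{\op{rig}}(U) \to$, where $U := X \setminus X_s$, and the canonical monodromy triangle
\[
R\Gamma_{\op{log-rig}}(X/\fS^\varnothing) \to R\Gamma_{\op{log-rig}}(X/\fS^\times) \xrightarrow{N} R\Gamma_{\op{log-rig}}(X/\fS^\times)(-1) \to
\]
coming from the short exact sequence $0 \to \Omega^\bullet_{X/\fS^\times}(-1)[-1] \xrightarrow{d\log t \wedge} \Omega^\bullet_{X/\fS^\varnothing} \to \Omega^\bullet_{X/\fS^\times} \to 0$ of log de Rham complexes on $X$ equipped with the divisorial log structure along $X_s$ (here $t$ is a local parameter at $s \in C$). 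These triangles are coupled by two $\vp$-compatible comparison isomorphisms: the Gro{\ss}e-Kl\"onne identification $R\Gamma_{\op{rig}}(U) \cong R\Gamma_{\op{log-rig}}(X/\fS^\varnothing)$, and a log-smooth proper base change isomorphism $R\Gamma_{\op{log-rig}}(X/\fS^\times) \cong R\Gamma_{\op{log-rig}}(X_s/\fS^\times)$ (compatible with $N$) along the inclusion $s^\times \hookrightarrow \fS^\times$, valid because the log fiber of $f: X \to C$ at $s^\times$ is exactly $X_s$ with its pullback log structure.

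Applying the octahedral axiom to the composite $R\Gamma_{\op{rig}}(X) \to R\Gamma_{\op{rig}}(U) \cong R\Gamma_{\op{log-rig}}(X/\fS^\varnothing) \to R\Gamma_{\op{log-rig}}(X/\fS^\times) \cong R\Gamma_{\op{log-rig}}(X_s/\fS^\times)$ produces an exact triangle whose middle term is the 2-term complex $[R\Gamma_{\op{log-rig}}(X_s/\fS^\times) \xrightarrow{N} R\Gamma_{\op{log-rig}}(X_s/\fS^\times)(-1)]$ and whose third term is $R\Gamma_{X_s,\op{rig}}(X)[1]$. A final step identifies its first vertex with $R\Gamma_{\op{rig}}(X_s)$: the composite map out of $R\Gamma_{\op{rig}}(X)$ factors through the natural pullback $R\Gamma_{\op{rig}}(X) \to R\Gamma_{\op{rig}}(X_s)$ followed by the Clemens-Schmid specialization $\gamma$ (change of log structure from trivial to pullback on $X_s$), and one tracks through the octahedral diagram to verify that the triangle descends to one with $R\Gamma_{\op{rig}}(X_s)$ as first vertex and $\gamma$ as first map.

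The main technical hurdle is establishing the log-smooth proper base change isomorphism for log-rigid cohomology at $s^\times$, compatibly with Frobenius and $N$, in the weak-formal/dagger setting of \cite{GrosseKlonne2005}; this requires choosing compatible exactifications and verifying that the short exact sequence of log de Rham complexes on $X$ restricts to the corresponding sequence on $X_s$. A secondary obstacle is the careful bookkeeping of Tate twists and cohomological shifts through the octahedral assembly so that the resulting triangle matches the statement verbatim, and the identification of the connecting morphism with the Poincar\'e dual of the specialization map $\op{sp}^*$ of the original Flach-Morin conjecture.
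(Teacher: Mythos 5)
Your high-level strategy (reduce to log-rigid cohomology via Lemma \ref{log-rig_instead_of_log-crys}, use Poincar\'e duality to trade $R\Gamma^*_{\op{rig}}(X_s)$ for $R\Gamma_{X_s,\op{rig}}(X)$, and couple the localization triangle with the monodromy triangle) matches the paper, but the assembly has a genuine gap at exactly the point where the real content lies: the passage from the total space $X$ to the fiber $X_s$. Granting all your comparison isomorphisms, the localization triangle combined with the monodromy identification of $R\Gamma_{\op{rig}}(U)$ yields a triangle
\[
R\Gamma_{X_s,\op{rig}}(X) \to R\Gamma_{\op{rig}}(X) \to \bigl[R\Gamma_{\op{log-rig}}(X_s/\fS^{\times}) \xrightarrow{N} R\Gamma_{\op{log-rig}}(X_s/\fS^{\times})(-1)\bigr] \to,
\]
whose first (rotated: third) vertex is $R\Gamma_{\op{rig}}(X)$, not $R\Gamma_{\op{rig}}(X_s)$; these are genuinely different complexes. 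The octahedral axiom cannot repair this: the fiber of $R\Gamma_{\op{rig}}(U)\to R\Gamma_{X_s,\op{rig}}(X)[1]$ \emph{is} $R\Gamma_{\op{rig}}(X)$, and the assertion that the triangle ``descends'' so that its first vertex becomes $R\Gamma_{\op{rig}}(X_s)$ with first map $\gamma$ is equivalent to the statement that the cone of $\gamma: R\Gamma_{\op{rig}}(X_s)\to R\Gamma_{\op{log-rig}}(X_s/\fS^\varnothing)$ is $R\Gamma_{X_s,\op{rig}}(X)[1]$ --- which is essentially the theorem itself, so your final step is circular as written. The paper supplies precisely this missing input as Lemmas \ref{flach_morin_support_no_compactification} and \ref{flach_morin_conv_logrig_Xs}: one first writes $R\Gamma_{X_s,\op{rig}}(X)$ as the mapping fiber $[R\Gamma_{\op{conv}}(X)\to R\Gamma_{\op{rig}}((X\setminus X_s,X))]$ without any compactification, identifies the second term with $R\Gamma_{\op{log-conv}}((X,M)/\fS^\varnothing)$ by Shiho, and then proves the genuinely geometric ``restriction to the fiber'' statement
\[
[R\Gamma_{\op{conv}}(X)\to R\Gamma_{\op{log-conv}}((X,M)/\fS^\varnothing)] \cong [R\Gamma_{\op{conv}}(X_s)\to R\Gamma_{\op{log-conv}}((X_s,M_s)/\fS^\varnothing)]
\]
by restricting the de Rham complexes to the tube of $X_{s}$, using the admissible covering of $\mathscr{C}_\eta$, the fact that $X_s$ is a divisor, and Kiehl's theorem for the quasi-Stein inclusion. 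Your ``log-smooth proper base change'' hurdle is a cousin of this, but acknowledging it does not close the argument, and formal triangulated bookkeeping is no substitute for it.

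Two secondary problems. First, ``$R\Gamma_{\op{log-rig}}(X/\fS^{\times})$'' is not well defined: $(X,M)$ is a log scheme over $(C,N)$ (lifted to $(\mathscr{C},\mathscr{N})$), not over the log point, and your short exact sequence of log de Rham complexes with $d\log t$ lives in the relative setting over the curve; its terms only compute cohomology over $\fS^{\times}$ after restriction to the tube of $X_s$ --- again the content of Lemma \ref{flach_morin_conv_logrig_Xs}. Second, $X$ is proper only over $C$, and $C$ need not be proper, so the identification $R\Gamma_{\op{rig}}(U)\cong R\Gamma_{\op{log-rig}}((X,M)/\fS^\varnothing)$ is not available as stated: Shiho's comparison gives the rigid cohomology of the pair $(X\setminus X_s, X)$, not of $U$ with respect to a compactification, which is exactly why the paper replaces $R\Gamma_{\op{rig}}(X)$ and $R\Gamma_{\op{rig}}(U)$ by $R\Gamma_{\op{conv}}(X)$ and $R\Gamma_{\op{rig}}((X\setminus X_s,X))$ in Lemma \ref{flach_morin_support_no_compactification} before making any comparison with log-convergent cohomology.
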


    Our driving methodology is to adapt Chiarellotto and Tsuzuki's proof \cite{ChiarellottoTsuzuki2014} of the existence and exactness of the Clemens--Schmid sequence
    \begin{align*}
        \cdots &\to H^m_{\op{rig}}(X_s) \xrightarrow{\gamma} H^m_{\op{log-crys}}((X_s,M_s)/S^{\times}) \otimes K \xrightarrow{N_m}\\
        &H^m_{\op{log-crys}}((X_s,M_s)/S^{\times}) \otimes K(-1) \xrightarrow{\delta} H^{m+2}_{X_s,\op{rig}}(X) \xrightarrow{\alpha} H^{m+2}_{\op{rig}}(X_s) \to \cdots
    \end{align*}
    together with the theory of log-convergent and log-rigid cohomology. In \cite{ChiarellottoTsuzuki2014} the authors need to restrict themselves to the case of finite fields to prove their main result because they use \cite{Crew98}. Since in the present article we do not need to compare the monodromy and the weight filtrations, we can avoid this restriction. Another difference from \cite{ChiarellottoTsuzuki2014} is that we prove a result at the level of the derived category of complexes and not just in cohomology.

    The similarity between the Clemens--Schmid sequence and the Flach-Morin triangle is clear in light of Berthelot's Poincar\'e duality \cite{Berthelot1997}, which states that the dual of rigid cohomology is given by the rigid cohomology with compact support
    \[
        R\Gamma_{X_s,\op{rig}}(Y) \cong R\Gamma^*_{\op{rig}}(X_s)(-n-1)[-2n-2]
    \]
    where $Y$ is a smooth $k$-scheme admitting a closed immersion $X_s \hookrightarrow Y$; in our context we may choose, in particular, $Y = X$.

    We follow their general idea of linking the localization triangle for rigid cohomology with respect to the closed subscheme $X_s$ with the canonical exact triangle for the monodromy operator in log-crystalline cohomology, using the fact that the rigid cohomology of the open complement of a closed subscheme can be computed using logarithmic structures.

    We work with log-rigid cohomology (see \S~\ref{notation}) in place of log-crystalline cohomology because of the flexibility of the former.
    This is possible because log-crystalline and log-rigid cohomology, and their respective monodromy operators, agree in the proper and log-smooth case (Lemma~\ref{lmm:comp}).

After the submission of this article, Binda--Gallauer--Vezzani showed in \cite{BGV23} how to deduce the Clemens--Schmid sequence and the Flach--Morin conjecture with motivic methods. While their approach proves the general case, our method is more explicit. \\[1ex]
\textbf{Acknowledgments.} The authors thank Veronika Ertl for insightful conversations. They also warmly thank the referee for the valuable remarks that significantly improved the quality of the article.
\section{Review of Rigid Cohomology}\label{notation}
    We begin by reviewing the basic concepts of rigid cohomology that we will need in the sequel.
    For the moment, we forget about log structures.
    A \emph{frame} $(X \subseteq Y \subseteq \mathfrak{P})$ (see \cite[Definition 3.1.5]{LeStum2007}) is a sequence of inclusions
    \[
    X \hookrightarrow Y \hookrightarrow \mathfrak{P}    
    \]
    where $X \hookrightarrow Y$ is an open immersion of $k$-varieties and $Y \hookrightarrow \mathfrak{P}$ is a closed immersion in a formal $W$-scheme $\mathfrak{P}$. 

    Fix a frame $(X \subseteq X' \subseteq \mathfrak{P})$ where $X,X'$, and $\mathfrak{P}$ are separated and locally of finite type and where $\mathfrak{P}$ is smooth in a neighborhood of $X$. 
    The \emph{rigid cohomology of the pair $(X,X')$} is defined as
    \begin{equation}\label{eq:rigid}
           R\Gamma_{\op{rig}}((X,X')) := R\Gamma(\tube{X'}{\fP}, \jdag_{X}\Omega^\bullet_{\tube{X'}{\fP}})\ ,
    \end{equation}
    where $\jdag_{X}$ is the functor of overconvergent sections \cite[p.129]{LeStum2007} and is denoted by $\jdag_{\tube{X}{\fP}}$ in \cite{ChiarellottoTsuzuki2014}. In general, for a rigid analytic variety $V$ and an admissible open $T\subset V$ it is possible to define the functor $\jdag_{V\setminus T}$ of overconvergent sections along $T$. When $V\subset \tube{X'}{\fP}$ is a strict neighborhood of $\tube{X}{\fP}$ and $T=V\cap (\tube{X'}{\fP}\setminus \tube{X}{\fP})$ we have $\jdag_X=\jdag_{V\setminus T}$.
    \emph{A priori} the definition in \eqref{eq:rigid} also depends on the formal scheme $\mathfrak{P}$, but it can be shown (see \cite[7.4.2]{LeStum2007}, for example) that it depends (up to quasi-isomorphism) only on the immersion $X \hookrightarrow X'$.

    The two important cases of this construction that we use are the following.
    First, the \emph{convergent cohomology} of $X$ is defined to be 
    \[
    R\Gamma_{\op{conv}}(X) := R\Gamma_{\op{rig}}((X,X)).
    \]
    Second, if $X \hookrightarrow \overline{X}$ is an open immersion with $\overline{X}$ proper, the \emph{rigid cohomology} of $X$ is defined to be 
    \[
        R\Gamma_{\op{rig}}(X) := R\Gamma_{\op{rig}}((X,\overline{X})).
    \]
    It is a fundamental result of the rigid cohomology theory established by Berthelot that this definition is not only independent of the formal scheme $\mathfrak{P}$, but also of the compactification $\overline{X}$ (see \cite[Proposition 8.2.1]{LeStum2007}).
    Note that if $X$ is already proper, then the convergent cohomology coincides with the rigid cohomology.

    To understand the dual $R\Gamma^*_{\op{rig}}(X_s)$, we will also need the notion of rigid cohomology with support in a closed subset.
    \begin{definition}\label{rigid_coh_with_support_def}
        Let $X$ be a $k$-scheme, $Z \subseteq X$ a closed subscheme, and fix a frame $(X \subseteq \overline{X} \subseteq \fP)$ where $\overline{X}$ is proper and $\fP$ is smooth in a neighborhood of $X$.
        We define 
        \[
        R\Gamma_{Z,\op{rig}}(X) := R\Gamma(\tube{\overline{X}}{P},\underline{\Gamma}^\dagger_{\tube{Z}{\fP}} \jdag_X\Omega^\bullet_{\tube{\overline{X}}{\fP}})
        \]
        to be the \emph{rigid cohomology of $X$ with support in $Z$}. (see \cite{Berthelot1997} or \cite[Definition 6.3.1]{LeStum2007})
    \end{definition}   

    Its relation to standard rigid cohomology is as follows.
    As an immediate consequence of the definition (see \cite[Proposition 5.2.4 (ii)]{LeStum2007}) we have, for any sheaf $\cE$ on $\tube{\overline{X}}{\fP}$, the following exact sequence 
    \[
        0 \to \uGamma^\dagger_Z \jdag_X \cE \to \jdag_X \cE \to \jdag_{\tube{\overline{X}}{\fP} \setminus \tube{Z}{\fP}}\jdag_X \cE \to 0\ .
    \]
    But by \cite[Proposition 5.1.7]{LeStum2007}, we get the exact sequence
    \[
        0 \to \uGamma^\dagger_Z \jdag_X \cE \to \jdag_X \cE \to \jdag_U \cE \to 0  \ ,
    \]
      where $U=X\setminus Z$.
    In this way we obtain an exact sequence 
    \[
    0 \to \underline{\Gamma}^\dagger_{\tube{Z}{\fP}} \jdag_X\Omega^\bullet_{\tube{\overline{X}}{\fP}} \to \jdag_X\Omega^\bullet_{\tube{\overline{X}}{\fP}} \to \jdag_U\Omega^\bullet_{\tube{\overline{X}}{\fP}} \to 0
    \]
    which induces in cohomology the \emph{localization triangle}
    \[
    R\Gamma_{Z,\op{rig}}(X) \to R\Gamma_{\op{rig}}(X) \to R\Gamma_{\op{rig}}(U) \xrightarrow{+}.  
    \]


    There is a theory of log-rigid cohomology, described by Gro{\ss}e-Kl\"onne (see, for example, \cite{GrosseKlonne2005}) that generalizes the log-crystalline cohomology of Hyodo and Kato in the same way that rigid cohomology generalizes crystalline cohomology.
    Here, we give a brief overview and refer the reader to \cite[\S 1.3]{GrosseKlonne2005} for precise definitions.

    Write $\fS := \op{Spf}(W)$ and let $\fS^{\times}$ (resp. $\fS^\varnothing$) denote the weak formal log-scheme $(\fS, 1 \mapsto 0)$ (resp. $(\fS, \text{trivial})$).
    Let $X$ be a fine log-scheme over the log point $s^\times=(\op{Spec}(k), 1 \mapsto 0)$.
    One can choose an open covering $X = \bigcup_{i \in I}V_i$ with exact closed immersions $V_i \hookrightarrow \mathfrak{V}_i$ and for each $H \subseteq I$ let $\mathfrak{V}_H$ be  an exactification 
    \[
    V_H := \bigcap_{i \in H}V_i \to \mathfrak{V}_H \to  \varprojlim_{i\in H}  \mathfrak{V}_i \ ,
    \]
    where the inverse limit is taken in the category of weak formal schemes over $\fS$.
    From these weak formal embeddings we can canonically construct a simplicial dagger space $\tube{V_\bullet}{\mathfrak{V}_\bullet} := (\tube{V_H}{\mathfrak{V}_H})_{H \subseteq I}$. 
    The cohomology of the corresponding de Rham complex
    \[
    R\Gamma_{\op{log-rig}}(X/\fS^{\times}) := R\Gamma(\tube{V_\bullet}{\mathfrak{V}_\bullet}, \Omega^\bullet_{\tube{V_\bullet}{\mathfrak{V}_\bullet}})
    \]
    is defined as the \emph{log-rigid cohomology of $X$} with respect to $\fS^{\times}$.
    It can be shown to be independent of the covering $X = \bigcup_i V_i$.\footnote{Although we will not use it, it might be of interest for the reader that  the complex $R\Gamma_{\op{log-rig}}(X_s/\fS^{\times})$ together with its monodromy operator can be calculated using the overconvergent logarithmic de Rham--Witt complex of Gregory--Langer \cite{GL2020}.  }

    One can also define a logarithmic equivalent of convergent cohomology for a fine log scheme $X/k$, called log-convergent cohomology, which we denote by $R\Gamma_{\op{log-conv}}(X/\fS^\times)$.
    It is generally defined formally as the cohomology of the trivial isocrystal on the log-convergent site (\cite[\S 2.1]{Shiho2002}), but it also admits an interpretation through the cohomology of a suitable logarithmic de Rham complex \cite[\S2.1, Corollary 2.3.9]{Shiho2002}. 
    In fact, by replacing weak formal schemes and dagger spaces with formal schemes and rigid spaces in the definition of log-rigid cohomology, one recovers Shiho's log-convergent cohomology \cite[\S 1.5]{GrosseKlonne2005}.
    In particular, if $X$ is proper, then log-convergent cohomology coincides with log-rigid cohomology.
    If $X$ is additionally log-smooth, these two cohomologies also coincide with log-crystalline cohomology \cite[pp.\,401]{GrosseKlonne2005}.

    Finally, we can also repeat the construction of log-rigid cohomology with $\fS^\varnothing$ to obtain cohomology groups $R\Gamma_{\op{log-rig}}(X/\fS^\varnothing)$. 
    If now $X$ is a $k$-scheme equipped with the trivial log structure, not necessarily proper, we have an isomorphism (see \cite[pp.\,401]{GrosseKlonne2005})
    \[
    R\Gamma_{\op{log-rig}}(X/\fS^\varnothing) \cong R\Gamma_{\op{rig}}(X).     
    \]

    \section{Proof of the Main Result}
    In this section we prove Theorem~\ref{flach-morin_main_theorem}, stated in the Introduction.
    \begin{remark}
        Our setting is the derived category of $\vp$-modules, and it will be implicit that all the quasi-isomorphisms below are compatible with Frobenius when the objects have a non-trivial structure of the $\vp$-module.
    \end{remark}
    \begin{lemma}\label{lmm:comp}
        To prove Theorem \ref{flach-morin_main_theorem}, it suffices to construct an exact triangle as in the statement where $R\Gamma_{\op{log-crys}}(X_s/S^{\times})$ and its monodromy operator $N$ are replaced by $R\Gamma_{\op{log-rig}}(X_s/\fS^{\times})$ and its monodromy operator. 
    \end{lemma}
    \begin{proof}
        By \cite[Theorem 3.1.1]{Shiho2002} and \cite[\S 1.5]{GrosseKlonne2005}, there is a map
        \[
        \alpha: R\Gamma_{\op{log-rig}}(X_s/\fS^{\times}) \to  R\Gamma_{\op{log-crys}}(X_s/S^{\times}) \ 
        \]
        which is an isomorphism because we are in the proper and log-smooth case. 
        The map $\alpha$ is obtained by considering that the log-rigid cohomology coincides with the log-convergent  one and the latter can be calculated by classical tubes when the immersion is exact. Also, there is a natural map from these classical tubes to the divided-power ones which are used to calculate the log-crystalline cohomology (cf. \cite[Proof of proposition~1.9]{BerFinitude}). In both cases, monodromy operators are defined as the connecting morphism arising from appropriate short exact sequences coming from an embedding system, which can be used for both cohomology theories:  we can assume that the embedding system is exact, for instance as in \cite[\S~5.2]{GrosseKlonne2005}. The log-rigid one is detailed in \cite[\S 5.4]{GrosseKlonne2005} and the log-crystalline one is analogous.
    \end{proof}
 \begin{remark}[log-rigid comparison]\label{log-rig_instead_of_log-crys}
    Note that log-rigid cohomology over $\fS^{\times}$ can alternatively be defined as \cite{ErtlYamadaRigAnalRecon}. The definition of Ertl--Yamada gives cohomology groups isomorphic to those of Gro{\ss}e-Kl\"onne \cite[Remark 2.4]{ErtlYamadaRigAnalRecon}. In both cases, the monodromy operator is defined starting with a short exact sequence \cite[eq. (3.36) and (3.37)]{ErtlYamadaRigAnalRecon} and \cite[\S~5.4]{GrosseKlonne2005} and there is a natural map between the two.

    In fact, in \cite{ErtlYamadaRigAnalRecon} the authors use an embedding system defined over $W[[s]]$, while in \cite{GrosseKlonne2005} it is over $W[s]^\dag$. The embedding system can be constructed as in \cite[Lemma~2.6 and Definition~2.7]{ErtlYamadaRigAnalRecon} both over $W[s]^\dag$ and over $W[[s]]$, compatibly with the natural map $W[s]^\dag\to W[[s]]$. From this we get the map of short exact sequences and 
 the monodromy is compatible because of isomorphism between log-rigid cohomology groups over $\fS^{\times}$.
    \end{remark}

As a first step, we reformulate the results of \cite{ChiarellottoTsuzuki2014} entirely in the language of derived categories. Note that the open inclusion $X\setminus X_s\to X$ induces a natural map $\iota:R\Gamma_{\op{rig}} (X,X)\to R\Gamma_{\op{rig}} (X\setminus X_s,X)$. Moreover $R\Gamma_{\op{rig}} (X,X)=R\Gamma_{\op{conv}}(X)$.
First we prove 
\begin{lemma}\label{flach_morin_support_no_compactification}
    There is a canonical isomorphism 
    \[
    R\Gamma_{X_s, \op{rig}}(X) \cong [\iota:R\Gamma_{\op{conv}}(X) \to R\Gamma_{\op{rig}}((X \setminus X_s, X))]
    \]
    where $[-]$ denotes the homotopy limit.
\end{lemma}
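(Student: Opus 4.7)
The plan is to obtain the desired isomorphism as the fiber interpretation of a localization-style triangle
\[
R\Gamma_{X_s,\op{rig}}(X) \to R\Gamma_{\op{conv}}(X) \to R\Gamma_{\op{rig}}((X \setminus X_s, X)) \to
\]
computed via a frame in which the role of a proper compactification of $X$ is played by $X$ itself. Since $X$ is smooth over $k$, one can, possibly after passing to a simplicial open cover, choose a closed embedding of $X$ into a smooth weak formal $W$-scheme $\fP$. With this frame the definitions in Section~\ref{notation} give $R\Gamma_{\op{conv}}(X) = R\Gamma(\tube{X}{\fP}, \Omega^\bullet_{\tube{X}{\fP}})$ and $R\Gamma_{\op{rig}}((X \setminus X_s, X)) = R\Gamma(\tube{X}{\fP}, \jdag_{X \setminus X_s} \Omega^\bullet_{\tube{X}{\fP}})$.

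Repeating on $\tube{X}{\fP}$ the derivation recalled in Section~\ref{notation}, via \cite[Propositions 5.1.7 and 5.2.4(ii)]{LeStum2007} with the choice $X' = X$, yields a short exact sequence of de Rham complexes
\[
0 \to \uGamma^\dagger_{\tube{X_s}{\fP}} \Omega^\bullet_{\tube{X}{\fP}} \to \Omega^\bullet_{\tube{X}{\fP}} \to \jdag_{X \setminus X_s} \Omega^\bullet_{\tube{X}{\fP}} \to 0,
\]
whose associated long exact triangle in hypercohomology has middle term $R\Gamma_{\op{conv}}(X)$ and right term $R\Gamma_{\op{rig}}((X \setminus X_s, X))$. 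What remains is to identify the leftmost term $R\Gamma(\tube{X}{\fP}, \uGamma^\dagger_{\tube{X_s}{\fP}}\Omega^\bullet_{\tube{X}{\fP}})$ canonically with $R\Gamma_{X_s,\op{rig}}(X)$ as given in Definition~\ref{rigid_coh_with_support_def}, which a priori is defined only via a proper compactification $\overline{X}$ of $X$.

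This last identification is the heart of the argument and the step I expect to be most delicate. The key observation is that because $X_s$ is already closed in $X$, the sheaf $\uGamma^\dagger_{\tube{X_s}{-}}\jdag_X\Omega^\bullet$ concentrates its sections on a strict neighborhood of $\tube{X_s}{-}$ which can be arranged to sit inside $\tube{X}{-}$ regardless of whether one first compactifies $X$. Comparing the computation on $\tube{X}{\fP}$ with that on $\tube{\overline{X}}{\fP_{\overline{X}}}$ via a suitable morphism of frames should then reduce to the standard independence arguments for rigid cohomology recalled in \cite[Chapter 8]{LeStum2007}, or to an excision argument in the spirit of \cite{Berthelot1997}. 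Once this independence is in hand, the lemma follows from the triangle constructed above, since by convention the homotopy limit $[A \to B]$ is the fiber of $A \to B$.
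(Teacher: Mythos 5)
Your construction of the triangle on the frame $(X \subseteq X \subseteq \fP)$ is fine — it is just the localization sequence of Section \ref{notation} applied with $X' = X$ — but the step you defer, identifying $R\Gamma(\tube{X}{\fP}, \uGamma^\dagger_{\tube{X_s}{\fP}}\Omega^\bullet_{\tube{X}{\fP}})$ with $R\Gamma_{X_s,\op{rig}}(X)$ as defined via a compactification, is not a routine appeal to known results: it \emph{is} the lemma. The independence theorems of \cite[Chapter 8]{LeStum2007} that you invoke require the middle term of the frame to be proper; for a non-proper middle term the cohomology of a pair genuinely depends on the pair, which is exactly why the statement features $R\Gamma_{\op{conv}}(X)$ and $R\Gamma_{\op{rig}}((X\setminus X_s, X))$ rather than $R\Gamma_{\op{rig}}(X)$ and $R\Gamma_{\op{rig}}(X\setminus X_s)$. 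So "standard independence" cannot be quoted here, and "excision in the spirit of \cite{Berthelot1997}" is not specified; what must actually be proved is that the support term does not feel the overconvergence conditions along $\overline{X}\setminus X$, i.e., that the boundary of the compactification can be discarded.

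The paper's proof shows where the real input lies. Working on the compactified simplicial frame, the mapping-fiber complex defining $R\Gamma_{X_s,\op{rig}}(X)$ is acyclic on $\tube{\overline{X}_\bullet}{\mathscr{P}_\bullet}\setminus\tube{X_{s,\bullet}}{\mathscr{P}_\bullet}$ and on its overlap with $\tube{X_\bullet}{\mathscr{P}_\bullet}$, so by the admissible covering one is reduced to the restriction to $\tube{X_\bullet}{\mathscr{P}_\bullet}$; but to replace $Ra_{\bullet,*}$ by $a_{\bullet,*}$ for the open immersion of tubes $a_\bullet\colon \tube{X_\bullet}{\mathscr{P}_\bullet}\hookrightarrow\tube{\overline{X}_\bullet}{\mathscr{P}_\bullet}$ one needs the vanishing of the higher direct images on the relevant sheaves, which is the non-formal input \cite[Proposition 4.1]{ChiarellottoTsuzuki2014} (a quasi-Stein argument tied to $X_s$ being a divisor). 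Your sketch contains no analogue of this: the observation that sections of $\uGamma^\dagger_{\tube{X_s}{-}}$ are concentrated near $\tube{X_s}{-}$ controls the sheaf-theoretic support but not the higher cohomology contributions from the part of $\tube{\overline{X}}{\fP'}$ outside $\tube{X}{\fP'}$, nor does it produce the required comparison between the two unrelated frames $(X\subseteq X\subseteq \fP)$ and $(X\subseteq\overline{X}\subseteq\fP')$ (again, the standard frame-comparison machinery assumes a proper middle term). Until such a higher-direct-image or \v{C}ech-type argument is supplied, the crucial identification remains unproven, so the proposal has a genuine gap precisely at the point the lemma was designed to address.
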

In other words, $R\Gamma_{X_s,\op{rig}}(X)$ can be computed without passing to a compactification.

\begin{proof}
    For our basic landscape to compute rigid cohomology, we fix, as in \cite[\S 4]{ChiarellottoTsuzuki2014}, a simplicial Zariski hypercovering of the form 
    \begin{center}
        \begin{tikzcd}
            X_{s,\bullet} \ar[r] \ar[d] & X_\bullet \ar[r] \ar[d] & \overline{X}_\bullet \ar[r] \ar[d] & \mathscr{P}_\bullet\\
            X_s \ar[r] \ar[d] & X \ar[r] \ar[d] & \overline{X} \\ 
            s \ar[r] & C.
        \end{tikzcd}
    \end{center}
    Here the simplicial map $\overline{X}_\bullet \to \overline{X}$ is a Zariski affine hypercovering, $\mathscr{P}_\bullet$ is a simplicial formal scheme separated and of finite type over $\O_K$ which is smooth around $X_\bullet$, and which admits a Frobenius $\sigma_\bullet$ lifting that on $\O_K$\footnote{In \cite[\S 4]{ChiarellottoTsuzuki2014} the authors also need a compactification $\overline{X}$ over a smooth compactification of $C$ to apply a result of Crew on weight-monodromy in positive characteristic. In the present paper we do not use the result of Crew and hence we do not need such a compactification. }.
    Then by definition, we have 
    \begin{equation}\label{flach_morin_rigid_cohomology_def}
    R\Gamma_{X_s,\op{rig}}(X) = R\Gamma(\tube{\overline{X}_\bullet}{\mathscr{P_\bullet}},(\jdag_{\tube{X_\bullet}{\mathscr{P}_\bullet}}\Omega^\bullet_{\tube{\overline{X}_\bullet}{\mathscr{P}_\bullet}} \to \jdag_{\tube{X_\bullet \setminus X_{s,\bullet }}{\mathscr{P}_\bullet}}\Omega^\bullet_{\tube{\overline{X}_\bullet}{\mathscr{P}_\bullet}})_s)    
    \end{equation}
    where $(-)_s$ denotes the total complex of the morphism of complexes, interpreted as the rows of a double complex with $\jdag_{\tube{X_\bullet}{\mathscr{P}_\bullet}}\O_{\tube{\overline{X}_\bullet}{\mathscr{P}_\bullet}}$ in degree $(0,0)$. Here, for $U_\bullet= X_\bullet$ or $X_\bullet \setminus X_{s,\bullet }$, the symbol $\jdag_{\tube{U_\bullet}{\mathscr{P}_\bullet}}$ denotes the functor of overconvergent sections along $\tube{\overline{X}_\bullet \setminus U_\bullet}{\mathscr{P}_\bullet}$

    Consider now the following admissible covering of $\tube{\overline{X}_\bullet}{\mathscr{P}_\bullet}$
    \[
    \{
        \tube{\overline{X}_\bullet}{\mathscr{P}_\bullet} \setminus \tube{X_{s,\bullet}}{\mathscr{P}_\bullet}, \tube{X_\bullet}{\mathscr{P}_\bullet}.
        \}    
    \]
    Note that the two constituent complexes in (\ref{flach_morin_rigid_cohomology_def}) agree on the former admissible open subset and also on the intersection $(\tube{\overline{X}_\bullet}{\mathscr{P}_\bullet} \setminus \tube{X_{s,\bullet}}{\mathscr{P}_\bullet}) \cap \tube{X_\bullet}{\mathscr{P}_\bullet}$.
    As such the total complex in (\ref{flach_morin_rigid_cohomology_def}) restricts, on these admissible opens, to the total complex of the identity map, which has trivial cohomology \cite[Exercise 1.5.1]{Weibel1994}.
    It follows from Zariski descent that 
    \begin{align*}
        R\Gamma_{X_s,\op{rig}}(X) &= R\Gamma(\tube{\overline{X}_\bullet}{\mathscr{P_\bullet}},(\jdag_{\tube{X_\bullet}{\mathscr{P}_\bullet}}\Omega^\bullet_{\tube{\overline{X}_\bullet}{\mathscr{P}_\bullet}} \to \jdag_{\tube{X_\bullet \setminus X_{s,\bullet }}{\mathscr{P}_\bullet}}\Omega^\bullet_{\tube{\overline{X}_\bullet}{\mathscr{P}_\bullet}})_s) \\
        &\cong R\Gamma(\tube{\overline{X}_\bullet}{\mathscr{P_\bullet}},Ra_{\bullet, *}((\jdag_{\tube{X_\bullet}{\mathscr{P}_\bullet}}\Omega^\bullet_{\tube{\overline{X}_\bullet}{\mathscr{P}_\bullet}})|_{\tube{X_\bullet}{\mathscr{P}_\bullet}}\\
        &\hspace{120pt} \to (\jdag_{\tube{X_\bullet \setminus X_{s,\bullet }}{\mathscr{P}_\bullet}}\Omega^\bullet_{\tube{\overline{X}_\bullet}{\mathscr{P}_\bullet}})|_{\tube{X_\bullet}{\mathscr{P}_\bullet}})_s)
    \end{align*}
    where $a_{\bullet}: \tube{X_\bullet}{\mathscr{P}_\bullet} \hookrightarrow \tube{\overline{X}_\bullet}{\mathscr{P}_\bullet}$ is the inclusion.

    Furthermore, it was shown in \cite[Proposition 4.1]{ChiarellottoTsuzuki2014} that the direct image $a_{\bullet, *}$ is exact on the sheaves $\Omega^\bullet_{\tube{X_\bullet}{\mathscr{P}_\bullet}}$ and $\jdag_{\tube{X_\bullet \setminus X_{s,\bullet }}{\mathscr{P}_\bullet}}\Omega^\bullet_{\tube{X_\bullet}{\mathscr{P}_\bullet}}$, resulting from the fact that $X_s$ is a divisor.
    Hence
    \begin{align*}
        R\Gamma_{X_s,\op{rig}}(X) &= R\Gamma(\tube{\overline{X}_\bullet}{\mathscr{P_\bullet}},(\jdag_{\tube{X_\bullet}{\mathscr{P}_\bullet}}\Omega^\bullet_{\tube{\overline{X}_\bullet}{\mathscr{P}_\bullet}} \to \jdag_{\tube{X_\bullet \setminus X_{s,\bullet }}{\mathscr{P}_\bullet}}\Omega^\bullet_{\tube{\overline{X}_\bullet}{\mathscr{P}_\bullet}})_s) \\
        &\cong R\Gamma(\tube{\overline{X}_\bullet}{\mathscr{P_\bullet}},Ra_{\bullet, *}((\jdag_{\tube{X_\bullet}{\mathscr{P}_\bullet}}\Omega^\bullet_{\tube{\overline{X}_\bullet}{\mathscr{P}_\bullet}})|_{\tube{X_\bullet}{\mathscr{P}_\bullet}}\\ 
        &\hspace{120pt} \to (\jdag_{\tube{X_\bullet \setminus X_{s,\bullet }}{\mathscr{P}_\bullet}}\Omega^\bullet_{\tube{\overline{X}_\bullet}{\mathscr{P}_\bullet}})|_{\tube{X_\bullet}{\mathscr{P}_\bullet}})_s)\\
        &\cong R\Gamma(\tube{\overline{X}_\bullet}{\mathscr{P}_\bullet}, a_{\bullet,*}(\Omega^\bullet_{\tube{X_\bullet}{\mathscr{P}_\bullet}} \to \jdag_{\tube{X_\bullet \setminus X_{s,\bullet }}{\mathscr{P}_\bullet}}\Omega^\bullet_{\tube{X_\bullet}{\mathscr{P}_\bullet}})_s)\\ 
        &\cong R\Gamma(\tube{X_\bullet}{\mathscr{P}_\bullet},(\Omega^\bullet_{\tube{X_\bullet}{\mathscr{P}_\bullet}} \to \jdag_{\tube{X_\bullet \setminus X_{s,\bullet }}{\mathscr{P}_\bullet}}\Omega^\bullet_{\tube{X_\bullet}{\mathscr{P}_\bullet}})_s)\\
        &= [R\Gamma_{\op{conv}}(X) \to R\Gamma_{\op{rig}}((X \setminus X_s, X))].
    \end{align*}
    In the final line we've used the fact that the total complex of a morphism $\left(A^\bullet \to B^\bullet\right)_s$ is a representative of its mapping fiber $[A^\bullet \to B^\bullet]$.
    This is what we wanted to prove.
\end{proof}

    Now that we've established that the cohomology $R\Gamma_{X_s,\op{rig}}(X)$ can be computed without a compactification of (a simplicial cover of) $X$, we are now able to compute $R\Gamma_{X_s,\op{rig}}(X)$ via an alternative covering which replaces the data of a good compactification of $X$ with a covering respecting the log structure on $X$.

    Denote by $M$ the log structure on $X$ associated to the normal crossing divisor $X_s$.  In the following, the scheme $X_s$ will be considered as a log-scheme with  log-structure  induced by $M$.

\begin{lemma}\label{flach_morin_conv_logrig_Xs}
     There is a canonical isomorphism 
    \[
        R\Gamma_{X_s, \op{rig}}(X) \cong [R\Gamma_{\op{conv}}(X_s) \to R\Gamma_{\op{log-rig}}(X_s/\fS^\varnothing)].
    \]    
\end{lemma}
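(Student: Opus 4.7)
The strategy is to produce a natural commutative square with the previous lemma's homotopy fiber on top and the target homotopy fiber on bottom, and then show the square is a homotopy pullback. Explicitly, I would construct
\[
\begin{tikzcd}
R\Gamma_{\op{conv}}(X) \arrow[r] \arrow[d] & R\Gamma_{\op{rig}}((X\setminus X_s, X)) \arrow[d] \\
R\Gamma_{\op{conv}}(X_s) \arrow[r] & R\Gamma_{\op{log-rig}}(X_s/\fS^\varnothing)
\end{tikzcd}
\]
with both verticals given by restriction to $X_s$, verify commutativity, and argue that the induced map of horizontal homotopy fibers is a quasi-isomorphism. Combined with Lemma \ref{flach_morin_support_no_compactification}, this yields the desired canonical isomorphism.

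At the level of sheaves, I would work on the simplicial tube $\tube{X_\bullet}{\mathscr{P}_\bullet}$ built from the hypercovering used in the previous lemma, now equipped with the logarithmic structure induced by the NCD $X_{s,\bullet} \subset X_\bullet$. The left vertical is the tautological restriction of $\Omega^\bullet_{\tube{X_\bullet}{\mathscr{P}_\bullet}}$ along the closed inclusion $\tube{X_{s,\bullet}}{\mathscr{P}_\bullet} \hookrightarrow \tube{X_\bullet}{\mathscr{P}_\bullet}$. For the right vertical, the key input is the rigid-analytic analogue of Deligne's theorem: on a strict neighborhood of $\tube{X_{s,\bullet}}{\mathscr{P}_\bullet}$, the complex $\jdag_{\tube{X_\bullet\setminus X_{s,\bullet}}{\mathscr{P}_\bullet}}\Omega^\bullet_{\tube{X_\bullet}{\mathscr{P}_\bullet}}$ is quasi-isomorphic to the log de Rham complex $\Omega^\bullet_{\tube{X_\bullet}{\mathscr{P}_\bullet}}(\log X_{s,\bullet})$, and restriction of the latter to $\tube{X_{s,\bullet}}{\mathscr{P}_\bullet}$ lands in Grosse-Kl\"onne's log de Rham complex computing $R\Gamma_{\op{log-rig}}(X_s/\fS^\varnothing)$ with the pullback log structure. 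To check the homotopy pullback property, I would compare the vertical mapping cones directly: restriction of either of the two complexes to $\tube{X_{s,\bullet}}{\mathscr{P}_\bullet}$ splits it into a tangential part (recovering the de Rham or log de Rham complex of $X_s$) and a \emph{conormal} part, and the residue identification $d\pi \leftrightarrow d\log\pi$ of local parameters matches the two conormal parts, forcing the vertical cones to be quasi-isomorphic.

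The main obstacle is making the identification of overconvergent sections with log-forms rigorous in the simplicial weak-formal setting, and tracking the log structure compatibly under restriction to $X_s$ so that Grosse-Kl\"onne's log-rigid complex is recovered precisely. This should follow by combining exactifications as in \cite[\S 1.3]{GrosseKlonne2005} with a logarithmic analogue of the exactness result for $a_{\bullet,*}$ from \cite[Proposition 4.1]{ChiarellottoTsuzuki2014}, but the bookkeeping requires considerable care.
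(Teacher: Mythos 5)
Your overall skeleton (reduce via Lemma \ref{flach_morin_support_no_compactification} to a square whose horizontal fibers must be compared, then localize the difference near $X_s$) is in the right spirit, but the proposal has a genuine gap precisely at the point you defer to ``bookkeeping.'' The frame $\mathscr{P}_\bullet$ of Lemma \ref{flach_morin_support_no_compactification} is only required to be smooth around $X_\bullet$; it carries no lift of the divisor $X_{s,\bullet}$ nor of the log structure, so the complex $\Omega^\bullet_{\tube{X_\bullet}{\mathscr{P}_\bullet}}(\log X_{s,\bullet})$ you write down is not defined on its tubes, and the asserted sheaf-level quasi-isomorphism with $\jdag_{\tube{X_\bullet\setminus X_{s,\bullet}}{\mathscr{P}_\bullet}}\Omega^\bullet$ has no meaning there. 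Supplying an adapted embedding is not a routine refinement but the core of the argument: the paper lifts $C$ to a formal curve $(\sC,\mathscr{N})$ with $1\mapsto t$ at a lift of $s$, and invokes \cite[Proposition 4.3]{ChiarellottoTsuzuki2014} to build a simplicial \emph{\'etale} hypercovering $(X_\bullet,M_\bullet)\hookrightarrow(\widetilde{\cQ}^{\op{ex}}_\bullet,\widetilde{\mathscr{M}}_\bullet)$ by exact closed immersions into log-formal schemes log-smooth over $(\sC,\mathscr{N})$ admitting Frobenius lifts. Only on such a frame do Shiho's comparison theorems \cite[Corollary 2.3.9, Proposition 2.4.4]{Shiho2002}, together with \'etale descent for rigid cohomology \cite{ChiarellottoTsuzuki2003}, give the identification $R\Gamma_{\op{rig}}((X\setminus X_s,X))\cong R\Gamma_{\op{log-conv}}((X,M)/\fS^\varnothing)$ that your ``rigid-analytic Deligne theorem'' is standing in for, and only there does the restriction of the log de Rham complex to $\tube{X_{s,\bullet}}{\widetilde{\cQ}^{\op{ex}}_\bullet}$ genuinely compute the log-convergent (hence, by properness, log-rigid) cohomology of $(X_s,M_s)$ over $\fS^\varnothing$.

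The second gap is the homotopy-pullback step itself. The proposed comparison of the \emph{vertical} cones via a ``tangential/conormal splitting'' and a residue identification $d\pi\leftrightarrow d\log\pi$ is not a proof: for a normal crossing divisor there is no such natural splitting of the restricted complexes, and the residue map concerns the quotient $\Omega^\bullet\langle\widetilde{\mathscr{M}}_\bullet\rangle/\Omega^\bullet$, not a decomposition of either term. The paper's mechanism is different and is where the geometry enters: the fiber of $\Omega^\bullet_{\tube{X_\bullet}{\widetilde{\cQ}^{\op{ex}}_\bullet}}\to\Omega^\bullet_{\tube{X_\bullet}{\widetilde{\cQ}^{\op{ex}}_\bullet}}\langle\widetilde{\mathscr{M}}_\bullet\rangle$ is acyclic on the preimage of a strict neighborhood of $C\setminus\{s\}$ (the two complexes agree away from the divisor), and the inclusion $\tube{X_{s,\bullet}}{\widetilde{\cQ}^{\op{ex}}_\bullet}\hookrightarrow\tube{X_\bullet}{\widetilde{\cQ}^{\op{ex}}_\bullet}$ is quasi-Stein because $X_s$ is a divisor, so Kiehl's theorem ($R\iota_{\bullet,\eta*}=\iota_{\bullet,\eta*}$ on coherent sheaves) lets one compute the fiber on the tube of $X_{s,\bullet}$, exactly as in the proof of Lemma \ref{flach_morin_support_no_compactification}. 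Without an argument of this kind (or an equivalent support/localization statement), the assertion that your square is a homotopy pullback is unsupported.
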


\begin{proof}
    Denote by $M$ the log structure on $X$ associated to the normal crossing divisor $X_s$.
    We also attach to $C$ the log structure $N$ associated to the closed point $s \in C$, so that in particular the morphism $(X,M) \to (C,N)$ is log smooth. 

    Because $C$ is a smooth curve over $k$, it admits a smooth lifting $\widetilde{C}$ over $\O_K$ \cite[Corollaire III.7.4]{SGAI}.
    Let $\mathscr{C}$ be the completion of $\widetilde{C}$ along its special fiber $C$, let $\hat{s}$ be a lift of $s$ to $\mathscr{C}$ and let $t$ be a local coordinate of $\hat{s}$ over $\O_K$.
    Then $1 \mapsto t$ defines a log structure on $\mathscr{C}$, which we denote by $\mathscr{N}$.
    Then the maps
    \[
    s^\times \to (C,N) \to (\mathscr{C}, \mathscr{N})    
    \]
    are exact closed immersions and the latter two are log smooth over $(\op{Spec}k)^\varnothing$ and $\fS^\varnothing$, respectively.

    By \cite[Proposition 4.3]{ChiarellottoTsuzuki2014}, we can construct a simplicial \'etale hypercovering
    \begin{center}
        \begin{tikzcd}
            (X_\bullet, M_\bullet) \ar[r, "i_\bullet^{\op{ex}}"] \ar[d] & (\widetilde{\mathcal{Q}}_\bullet^{\op{ex}},\widetilde{\mathscr{M}}_\bullet) \ar[dd]\\
            (X,M) \ar[d] & \\
            (C,N) \ar[r] & (\mathscr{C}, \mathscr{N})
        \end{tikzcd}
    \end{center}
    where the log structure $M_\bullet$ on $X_\bullet$ is that induced by $M$ and where $i_\bullet^{\op{ex}}$ is an exact closed immersion into a simplicial log-formal scheme $(\widetilde{\mathcal{Q}}_\bullet^{\op{ex}},\widetilde{\mathscr{M}}_\bullet)$ which is log-smooth over $(\sC, \mathscr{N})$ and with $\widetilde{\mathcal{Q}}_{\bullet}^{\op{ex}}$ separated, smooth, and of finite type over $\O_K$.
    Moreover $(\widetilde{\mathcal{Q}}^{\op{ex}}_\bullet, \widetilde{\mathscr{M}}_\bullet) \to (\mathscr{C},\mathscr{N})$ is formally log-smooth and admits a lift of Frobenius.

    \begin{remark}
    The virtue of such a hypercovering is that, as we will see, it can be used both to compute rigid cohomology \emph{and} logarithmic rigid cohomology. 
    \end{remark}

    If we let $X_{s,\bullet}$ denote the induced \'etale hypercovering of $X_s$, we can now apply a pair of results of Shiho \cite[Corollary 2.3.9, Proposition 2.4.4]{Shiho2002} which in conjunction say that we have an identification 
    \[R\Gamma_{\op{rig}}((X_m \setminus X_{s,m}, X_m)) \cong R\Gamma_{\op{log-conv}}((X_m,M_m)/\fS^\varnothing)\ ,
    \]
    because every $(X_m,M_m)$ is a smooth log scheme and its log-structure is  trivial on $X_m \setminus X_{s,m}$.

    It follows by \'etale descent on rigid cohomology \cite{ChiarellottoTsuzuki2003} that 
    \[
        R\Gamma_{\op{rig}}((X\setminus X_s, X)) \cong R\Gamma_{\op{log-conv}}((X,M)/\fS^\varnothing).
    \]
    It is clear by looking at  their representative de Rham complexes that this isomorphism fits into the commutative diagram
    \begin{center}
        \begin{tikzcd}
            R\Gamma_{\op{conv}}(X) \ar[r] \ar[d, "="] & R\Gamma_{\op{log-conv}}((X,M)/\fS^\varnothing) \ar[d, "\cong"]\\
            R\Gamma_{\op{conv}}(X) \ar[r] & R\Gamma_{\op{rig}}((X \setminus X_s, X))
        \end{tikzcd}
    \end{center}
    so to prove the lemma it suffices to show, by Lemma \ref{flach_morin_support_no_compactification}, that
    \begin{equation*}
        \scalebox{.92}{$[R\Gamma_{\op{conv}}(X) \to R\Gamma_{\op{log-conv}}((X,M)/\fS^\varnothing)] \cong [R\Gamma_{\op{conv}}(X_s) \to R\Gamma_{\op{log-conv}}((X_s,M_s)/\fS^\varnothing)]$}.
    \end{equation*}

    The argument is similar to that of Lemma \ref{flach_morin_support_no_compactification}.
    The former mapping fiber can be written as
    \begin{equation}\label{flach_morin_conv_logconv_X}
        R\Gamma(\tube{X_\bullet}{\widetilde{\cQ}^{\op{ex}}_\bullet},
    (\Omega^\bullet_{\tube{X_\bullet}{\widetilde{\cQ}^{\op{ex}}_\bullet}} \to \Omega^\bullet_{\tube{X_\bullet}{\widetilde{\cQ}^{\op{ex}}_\bullet}} \langle \widetilde{\sM_\bullet}\rangle)_s).
    \end{equation}
    To compute this, consider the admissible cover of $\sC_\eta$ given by the tube of $\{s\}$ in $\sC$ and $V$ a strict affinoid neighborhood of $C \setminus \{s\}$ in $\sC$.
    Their inverse image in $\tube{X_\bullet}{\widetilde{\cQ}^{\op{ex}}_\bullet}$ is an admissible covering as well; the inverse image of the tube of $\{s\}$ is given by $\tube{X_{s,\bullet}}{\widetilde{\cQ}^{\op{ex}}_\bullet}$ and we denote by $V_\bullet$ the inverse image of $V$.
    The restrictions to $V_\bullet$ of the two complexes in \eqref{flach_morin_conv_logconv_X}  are the same, and the inclusion $\iota_{\bullet, \eta}: \tube{X_{s,\bullet}}{\widetilde{\cQ}^{\op{ex}}_\bullet} \to \tube{X_\bullet}{\widetilde{\cQ}^{\op{ex}}_\bullet}$ is quasi-Stein, again since $X_s$ is a divisor.
    It thus follows by the same argument as in Lemma \ref{flach_morin_support_no_compactification}, using Kiehl's result that $R\iota_{\bullet, \eta *} = \iota_{\bullet, \eta *}$ on coherent sheaves \cite[Satz~2.4]{Kie67}, that (\ref{flach_morin_conv_logconv_X}) is isomorphic to
    \[
        R\Gamma(\tube{X_{s,\bullet}}{\widetilde{\cQ}^{\op{ex}}_\bullet},
    (\Omega^\bullet_{\tube{X_{s,\bullet}}{\widetilde{\cQ}^{\op{ex}}_\bullet}} \to \Omega^\bullet_{\tube{X_{s,\bullet}}{\widetilde{\cQ}^{\op{ex}}_\bullet}} \langle \widetilde{\sM_\bullet}\rangle)_s)
    \]
    But this in turn is isomorphic to 
    \[
        [R\Gamma_{\op{conv}}(X_s) \to R\Gamma_{\op{log-conv}}(X_s/\fS^\varnothing)]    
    \]
    as desired.
\end{proof}

We are now in a position to prove the main claim:

\begin{proof}[Proof of Theorem \ref{flach-morin_main_theorem}]
    Due to Lemma~\ref{lmm:comp} we prove the statement with ``log-rig'' in place of ``log-crys'' and $R\Gamma_{\op{log-rig}}(X_s/\fS^{\times})$ is the log-rigid cohomology complex of Ertl--Yamada (see Remark~\ref{log-rig_instead_of_log-crys}).
   
    By \cite[Proposition 3.33~(1)]{ErtlYamadaRigAnalRecon}, the associated exact triangle is
    \[
    R\Gamma_{\op{log-rig}}(X_s/\fS^\varnothing) \to R\Gamma_{\op{log-rig}}(X_s/\fS^{\times}) \xrightarrow{N} R\Gamma_{\op{log-rig}}(X_s/\fS^{\times})(-1) \to.
    \]
    (The twist $R\Gamma_{\op{log-rig}}(X_s/\fS^{\times})(-1)$ arises from the fact that, as expected, $N\vp = p \vp N$; see \cite[Proposition 5.5]{GrosseKlonne2005}).
    We thus obtain a quasi-isomorphism
    \[
    R\Gamma_{\op{log-rig}}(X_s/\fS^\varnothing) \cong [R\Gamma_{\op{log-rig}}(X_s/\fS^{\times}) \xrightarrow{N} R\Gamma_{\op{log-rig}}(X_s/\fS^{\times})(-1)]. 
    \]
    
    Plugging this into the exact triangle corresponding to the mapping fiber in Lemma \ref{flach_morin_conv_logrig_Xs} we obtain an exact triangle 
    \begin{align*}
    R\Gamma_{X_s, \op{rig}}(X) \to R\Gamma_{\op{conv}}&(X_s) \to\\ 
    & [R\Gamma_{\op{log-rig}}(X_s/\fS^{\times}) \xrightarrow{N} R\Gamma_{\op{log-rig}}(X_s/\fS^{\times})(-1)] \to. 
    \end{align*}

    Finally, Poincar\'e duality \cite[Th\'eor\`eme 2.4]{Berthelot1997} (see \cite[\S 2.1]{ChiarellottoLeStum1999} for details on the Frobenius action) provides a canonical isomorphism
    \[
    R\Gamma_{X_s,\op{rig}}(X) \cong R\Gamma_{\op{rig}}(X_s)^*(-n-1)[-2n-2]
    \]
    so after substitution and shifting the triangle we obtain an exact triangle 
    \begin{center}
        \begin{tikzcd}[row sep=4em, column sep=-3.5em]
            & R\Gamma_{\op{rig}}(X_s)^*(-n-1)[-2n-1] \ar[dl] \\
        R\Gamma_{\op{rig}}(X_s) \ar[rr] && \left[R\Gamma_{\op{log-rig}}(X_s/\fS^{\times}) \xrightarrow{N} R\Gamma_{\op{log-rig}}(X_s/\fS^{\times})(-1)\right] \ar[ul]
        \end{tikzcd}
    \end{center}

    of $\vp$-modules, as desired.
\end{proof}

\begin{remark}
Of course a scheme $Y$ in characteristic $p$ can be embedded as a closed subscheme in other ways and these other embeddings suggest further directions for research.
For example, $Y$ may be the fiber of a 1-dimensional \emph{arithmetic} family, such as a discrete valuation ring of mixed or equal characteristic.

It may be interesting to consider $Y$ as as the special fiber of a scheme $X$ over  a complete discrete valuation ring with residue field $k$ and to give meaning to the cohomology
\[
    R\Gamma_Y(X)
\]
with support in the special fiber.
It may be possible to extend the proof to this situation if such a cohomology is defined.
One could also study a family over a discrete valuation ring of equicharacteristic $p$, for example when $Y$ is the special fiber of a scheme $X$ over $k\llbracket t \rrbracket$.
Rigid cohomology over such a Laurent series has a more analytic flavor.
It is defined to be a functor
\[
X \mapsto H^*_{\op{rig}}(X/\mathscr{E}_K)    
\]
taking values in graded vector spaces over the Amice ring 
\[
\mathscr{E}_K:= \left\{ \sum_{i \in \Z}a_it^i \in K\llbracket t,t^{-1}\rrbracket \,:\, \op{sup}_i|a_i|<\infty, \lim_{i \to -\infty} a_i = 0 \right\}.
\]
To study these objects one can study instead rigid cohomology over the bounded Robba ring $X \mapsto H^*_{\op{rig}}(X/\mathscr{E}^\dagger_K)$, where
\[
    \mathscr{E}^\dagger_K = \left\{ \sum_{i \in \Z}a_it^i \in K\llbracket t,t^{-1}\rrbracket : \op{sup}_i |a_i|<\infty, \exists \eta<1 \op{s.t.} \lim_{i \to -\infty} |a_i|\eta^i = 0\right\}    
\]
The bounded Robba ring has the additional virtue that it is a Henselian discretely valued field with residue field $k((t))$.
This cohomology theory is constructed so that when we base change to $\mathscr{E}_K$ one recovers $\mathscr{E}_K$-valued rigid cohomology (see \cite[\S 2.2]{LazdaPal2016} for details).
This is a direction for further research.
\end{remark}
    \bibliographystyle{halpha-abbrv}

\begin{thebibliography}{BGV23}
\expandafter\ifx\csname url\endcsname\relax
  \def\url#1{\texttt{#1}}\fi
\expandafter\ifx\csname doi\endcsname\relax
  \def\doi#1{\burlalt{doi:#1}{http://dx.doi.org/#1}}\fi
\expandafter\ifx\csname urlprefix\endcsname\relax\def\urlprefix{URL }\fi
\expandafter\ifx\csname href\endcsname\relax
  \def\href#1#2{#2}\fi
\expandafter\ifx\csname burlalt\endcsname\relax
  \def\burlalt#1#2{\href{#2}{#1}}\fi

\bibitem[Bei13]{Beilinson2013}
A.~Beilinson.
\newblock On the crystalline period map.
\newblock {\em Camb. J. Math.}, 1(1):1--51, 2013.
\newblock \doi{10.4310/CJM.2013.v1.n1.a1}.

\bibitem[Ber97a]{Berthelot1997}
P.~Berthelot.
\newblock Dualit\'{e} de {P}oincar\'{e} et formule de {K}\"{u}nneth en cohomologie rigide.
\newblock {\em C. R. Acad. Sci. Paris S\'{e}r. I Math.}, 325(5):493--498, 1997.
\newblock \doi{10.1016/S0764-4442(97)88895-7}.

\bibitem[Ber97b]{BerFinitude}
P.~Berthelot.
\newblock Finitude et puret\'{e} cohomologique en cohomologie rigide.
\newblock {\em Invent. Math.}, 128(2):329--377, 1997.
\newblock \doi{10.1007/s002220050143}.
\newblock With an appendix in English by Aise Johan de Jong.

\bibitem[Ber04]{Berger2004}
L.~Berger.
\newblock An introduction to the theory of {$p$}-adic representations.
\newblock In {\em Geometric aspects of {D}work theory. {V}ol. {I}, {II}}, pages 255--292. Walter de Gruyter, Berlin, 2004.

\bibitem[BGV23]{BGV23}
F.~Binda, M.~Gallauer, and A.~Vezzani.
\newblock Motivic monodromy and $p$-adic cohomology theories, 2023, \burlalt{2306.05099}{http://arxiv.org/abs/2306.05099}.

\bibitem[CL99]{ChiarellottoLeStum1999}
B.~Chiarellotto and B.~{Le Stum}.
\newblock Pentes en cohomologie rigide et {$F$}-isocristaux unipotents.
\newblock {\em Manuscripta Math.}, 100(4):455--468, 1999.
\newblock \doi{10.1007/s002290050212}.

\bibitem[Cre98]{Crew98}
R.~Crew.
\newblock Finiteness theorems for the cohomology of an overconvergent isocrystal on a curve.
\newblock {\em Ann. Sci. \'{E}cole Norm. Sup. (4)}, 31(6):717--763, 1998.
\newblock \doi{10.1016/S0012-9593(99)80001-9}.

\bibitem[CT03]{ChiarellottoTsuzuki2003}
B.~Chiarellotto and N.~Tsuzuki.
\newblock Cohomological descent of rigid cohomology for \'{e}tale coverings.
\newblock {\em Rend. Sem. Mat. Univ. Padova}, 109:63--215, 2003.

\bibitem[CT14]{ChiarellottoTsuzuki2014}
B.~Chiarellotto and N.~Tsuzuki.
\newblock Clemens-{S}chmid exact sequence in characteristic {$p$}.
\newblock {\em Math. Ann.}, 358(3-4):971--1004, 2014.
\newblock \doi{10.1007/s00208-013-0980-8}.

\bibitem[EY24]{ErtlYamadaRigAnalRecon}
V.~Ertl and K.~Yamada.
\newblock Rigid analytic reconstruction of {H}yodo--{K}ato theory, 2024.
\newblock \urlprefix\url{https://arxiv.org/abs/1907.10964v5}.

\bibitem[FM18]{FlachMorin2018}
M.~Flach and B.~Morin.
\newblock Weil-\'{e}tale cohomology and zeta-values of proper regular arithmetic schemes.
\newblock {\em Doc. Math.}, 23:1425--1560, 2018.

\bibitem[GK05]{GrosseKlonne2005}
E.~Gro{\ss}e-Kl\"{o}nne.
\newblock Frobenius and monodromy operators in rigid analysis, and {D}rinfel'd's symmetric space.
\newblock {\em J. Algebraic Geom.}, 14(3):391--437, 2005.
\newblock \doi{10.1090/S1056-3911-05-00402-9}.

\bibitem[GL20]{GL2020}
O.~Gregory and A.~Langer.
\newblock Overconvergent de {R}ham--{W}itt cohomology for semi-stable varieties.
\newblock {\em M\"{u}nster J. Math.}, 13(2):541--571, 2020.

\bibitem[Kie67]{Kie67}
R.~Kiehl.
\newblock Theorem {A} und {T}heorem {B} in der nichtarchimedischen {F}unktionentheorie.
\newblock {\em Invent. Math.}, 2:256--273, 1967.
\newblock \doi{10.1007/BF01425404}.

\bibitem[LeS07]{LeStum2007}
B.~{Le Stum}.
\newblock {\em Rigid cohomology}, volume 172 of {\em Cambridge Tracts in Mathematics}.
\newblock Cambridge University Press, Cambridge, 2007.
\newblock \doi{10.1017/CBO9780511543128}.

\bibitem[LP16]{LazdaPal2016}
C.~Lazda and A.~P\'{a}l.
\newblock {\em Rigid cohomology over {L}aurent series fields}, volume~21 of {\em Algebra and Applications}.
\newblock Springer, [Cham], 2016.
\newblock \doi{10.1007/978-3-319-30951-4}.

\bibitem[NN16]{NekovarNiziol2016}
J.~Nekov\'{a}\v{r} and W.~a. Nizio\l.
\newblock Syntomic cohomology and {$p$}-adic regulators for varieties over {$p$}-adic fields.
\newblock {\em Algebra Number Theory}, 10(8):1695--1790, 2016.
\newblock \doi{10.2140/ant.2016.10.1695}.
\newblock With appendices by Laurent Berger and Fr\'{e}d\'{e}ric D\'{e}glise.

\bibitem[SGA03]{SGAI}
{\em Rev\^{e}tements \'{e}tales et groupe fondamental ({SGA} 1)}, volume~3 of {\em Documents Math\'{e}matiques (Paris)}.
\newblock Soci\'{e}t\'{e} Math\'{e}matique de France, Paris, 2003.
\newblock S\'{e}minaire de g\'{e}om\'{e}trie alg\'{e}brique du Bois Marie 1960--61.

\bibitem[Shi02]{Shiho2002}
A.~Shiho.
\newblock Crystalline fundamental groups. {II}. {L}og convergent cohomology and rigid cohomology.
\newblock {\em J. Math. Sci. Univ. Tokyo}, 9(1):1--163, 2002.

\bibitem[Tsu99]{Tsuji1999}
T.~Tsuji.
\newblock Poincar\'{e} duality for logarithmic crystalline cohomology.
\newblock {\em Compositio Math.}, 118(1):11--41, 1999.

\bibitem[Wei94]{Weibel1994}
C.~A. Weibel.
\newblock {\em An Introduction To Homological Algebra}, volume~38 of {\em Cambridge Studies in Advanced Mathematics}.
\newblock Cambridge University Press, Cambridge, 1994.
\newblock \doi{10.1017/CBO9781139644136}.

\bibitem[Wu17]{Yitao2017}
Y.-T. Wu.
\newblock On the {$p$}-adic local invariant cycle theorem.
\newblock {\em Math. Z.}, 285(3-4):1125--1139, 2017.
\newblock \doi{10.1007/s00209-016-1741-7}.

\end{thebibliography}


\end{document}